\documentclass[11pt,reqno]{amsart}
\usepackage{graphicx}
\usepackage[colorlinks=true, urlcolor=blue, linkcolor=blue, citecolor=blue]{hyperref} \usepackage{amsmath,amssymb,amsfonts,euscript,enumerate}
\usepackage{amsthm}  
\usepackage{color}
\usepackage{verbatim}
\usepackage{latexsym}
\usepackage[nameinlink]{cleveref}
\usepackage{mathrsfs} 
\usepackage{mathtools}

\usepackage{booktabs}
\usepackage{tabularx}
\usepackage{array}

\usepackage[dvipsnames]{xcolor}    

\newcommand{\R}{\mathbb{R}}
\newcommand{\PV}{\mathrm{PV}}

\newcommand{\supp}{\operatorname{supp}}

\newtheorem{theorem}{Theorem}[section]
\newtheorem{lemma}[theorem]{Lemma}
\newtheorem{proposition}[theorem]{Proposition}
\newtheorem{corollary}[theorem]{Corollary}

\theoremstyle{plain}

\theoremstyle{definition}
\newtheorem{definition}[theorem]{Definition}

\theoremstyle{remark}
\newtheorem{remark}[theorem]{Remark}

\numberwithin{equation}{section}

\title[Liouville theorems with degenerate kernels]
{Liouville theorems for nonlocal Lane--Emden inequalities with degenerate kernels}

\author{Takwon Kim}
\address{School of Mathematics, Statistics and Data Science, Sungshin Women’s University, Seoul 02844, Republic of Korea.}
\email{takwon@sungshin.ac.kr}

\author{Taehun Lee}
\address{Department of Mathematics, Konkuk University, Seoul 05029, Republic of Korea.}
\email{taehun@konkuk.ac.kr}

\subjclass{Primary 35B53; Secondary 35R11, 35J61}

\keywords{Liouville theorems, nonlocal operators, Lane--Emden inequalities, degenerate kernels, test-function method}

\begin{document} 

\begin{abstract}
We establish a Liouville-type theorem for the inequality $\mathcal{L}_K u \ge |u|^q$ in $\mathbb{R}^n$, where $\mathcal{L}_K$ is a translation-invariant, degenerate elliptic integro-differential operator whose kernel is even and satisfies $0\le K(z)\le \Lambda|z|^{-n-2s}$. No lower ellipticity bound is imposed on the kernel, and no sign condition is imposed on the solution. We prove that every such solution is trivial when $1 \le q \le \frac{n}{n-2s}$ if $n > 2s$, and for every $q \ge 1$ if $n \le 2s$. The proof combines a test function method with a dyadic decomposition of the nonlocal tail and uses neither the maximum principle nor a fundamental solution.
\end{abstract}

\maketitle

\section{Introduction}

Liouville-type theorems play a pivotal role in the qualitative analysis of semilinear elliptic equations. They assert that, under suitable restrictions on the nonlinearity, any nonnegative solution (or supersolution) in the whole space is trivial. Such nonexistence results are a key ingredient in the method of moving planes, blow-up analysis, and the derivation of a priori bounds via rescaling arguments; see the seminal work of Gidas--Spruck \cite{GS81_CPAM} and the monograph \cite{QS19_book}.

In this paper, we consider the inequality
\begin{equation}\label{eq:main-ineq}
\mathcal L_Ku\ge |u|^q
\qquad\text{in }\mathbb R^n,
\end{equation}
where \(\mathcal L_K\) is the translation-invariant nonlocal operator given by
\begin{equation}\label{eq:LK}
\mathcal L_Ku(x)
:=
\mathrm{P.V.}\int_{\mathbb R^n}
\bigl(u(x)-u(x+z)\bigr)K(z)\,dz.
\end{equation}
Here \(K:\mathbb R^n\setminus\{0\}\to[0,\infty)\) is measurable and satisfies
\begin{equation}\label{eq:K-assump}
0\le K(z)\le\Lambda |z|^{-n-2s},
\qquad
K(z)=K(-z)
\qquad (z\ne0)
\end{equation}
for some \(\Lambda>0\).
No lower ellipticity bound is imposed on \(K\), so the kernel may be degenerate. In the uniformly elliptic case \(\lambda|z|^{-n-2s}\le K(z)\), operators of this form arise as generators of stable jump processes, and the regularity theory of the associated equations is well understood; see \cite{CS09_CPAM, CS11_Annals, CS11_ARMA}.
The inequality \eqref{eq:main-ineq} is understood in
the distributional sense specified in Definition~\ref{def:weak_sol}.
We assume that
\begin{equation}\label{eq:q-range}
1 \le q \le q_S := \frac{n}{n-2s} \quad \text{if } n > 2s, \qquad 1 \le q < \infty \quad \text{if } n \le 2s.
\end{equation}

The main result of this paper is the following.
\begin{theorem}\label{thm:main}
Let \(n\ge1\), \(s\in(0,1)\), and let \(K\) satisfy
\eqref{eq:K-assump}. Assume that \(q\) satisfies
\eqref{eq:q-range}. Then any weak supersolution $u$ of \eqref{eq:main-ineq} is trivial, namely, $u\equiv 0$ in $\mathbb{R}^n$.
\end{theorem}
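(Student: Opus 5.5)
We will use the test function method of Mitidieri--Pohozaev, adapted to the nonlocal setting. The weak formulation from Definition~\ref{def:weak_sol} presumably reads
\[
\int_{\mathbb R^n} u\,\mathcal L_K\varphi\,dx \;\ge\; \int_{\mathbb R^n}|u|^q\varphi\,dx
\]
for all nonnegative $\varphi\in C_c^\infty(\mathbb R^n)$, with $u\in L^q_{\rm loc}$ and a suitable tail integrability. We fix a radial cutoff $\phi\in C_c^\infty(B_2)$ with $0\le\phi\le1$ and $\phi\equiv1$ on $B_1$. We set $\varphi_R(x)=\phi(x/R)^m$ for a large integer $m$, chosen so that $\varphi_R^{-1/(q-1)}|\mathcal L_K\varphi_R|^{q'/q}$ stays controlled.

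\textbf{Step 1 (pointwise bounds on $\mathcal L_K\varphi_R$).} Since $K$ is even and only bounded above, the symmetrized second difference gives, for $|x|\le 4R$,
\[
|\mathcal L_K\varphi_R(x)|\le C\Lambda R^{-2s}\varphi_R(x)^{1-1/m}\;+\;(\text{tail}).
\]
This comes from splitting the integral at $|z|=R$: Taylor expansion inside, boundedness outside. The refinement with $\varphi_R^{1-1/m}$ is needed near the boundary of the support. We need no lower bound on $K$ because we only estimate $|\mathcal L_K\varphi_R|$ from above. For $|x|\ge 4R$ we use $\varphi_R(x)=0$, which gives $|\mathcal L_K\varphi_R(x)|\le C\Lambda R^{n}|x|^{-n-2s}$. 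We then perform a dyadic decomposition of the exterior, $A_k=\{2^kR\le|x|<2^{k+1}R\}$ for $k\ge2$, on which $|\mathcal L_K\varphi_R|\le C\Lambda R^{-2s}2^{-k(n+2s)}$.

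\textbf{Step 2 (absorbing $u$).} We split $\int u\,\mathcal L_K\varphi_R$ into the near part $B_{4R}$ and the dyadic annuli. On $B_{4R}$, Young's inequality with the weight $\varphi_R$ gives
\[
\tfrac12\int|u|^q\varphi_R+CR^{n-2sq'} .
\]
The difficulty is that on the annuli $\varphi_R$ vanishes, so we cannot absorb into the right-hand side directly.

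\textbf{Step 3 (the tail, the main obstacle).} The key idea is to bound $\int_{A_k}|u|$ using the inequality itself at a larger scale. Set $I(\rho)=\int_{B_\rho}|u|^q$. By Hölder, $\int_{A_k}|u|\le C(2^kR)^{n/q'}I(2^{k+1}R)^{1/q}$. Combining this with Steps 1--2, we obtain a recursive inequality of the form
\[
I(R)\le CR^{n-2sq'}+C\sum_{k\ge2}2^{-k(n+2s)}(2^kR)^{n/q'}R^{-2s}I(2^{k+1}R)^{1/q}.
\]
The sum converges geometrically because $n/q'<n+2s$. We first need an a priori polynomial growth bound $I(\rho)\le C\rho^{\beta}$, which should follow from the same inequality or from the tail condition in the definition. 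We then iterate the recursive inequality, or use a supremum-type quantity $\sup_{\rho\ge R}\rho^{-\gamma}I(\rho)$ with $\gamma$ chosen between the relevant exponents. This yields $I(R)\le CR^{n-2sq'}$ up to lower order terms. Handling this iteration with the right choice of weights is where I expect the real work.

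\textbf{Step 4 (conclusion).} If $q<q_S$ (or $n\le2s$), then $n-2sq'<0$, and letting $R\to\infty$ gives $\int|u|^q=0$. If $q=q_S$, then $I(R)$ is bounded, so $u\in L^q(\mathbb R^n)$. We then redo Step 2 with Hölder instead of Young, restricting the $u$-integrals to $\{R\le|x|\}$ where $\mathcal L_K\varphi_R$ lives nontrivially (plus the dyadic tail). These integrals tend to $0$ as $R\to\infty$ by integrability, which forces $\int|u|^q=0$. For $q=1$, where $q'=\infty$, we argue directly: $\int|u|\varphi_R\le\int|u||\mathcal L_K\varphi_R|\le CR^{-2s}\int_{\text{supp-ish}}|u|$, and the same recursion closes. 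In all cases we conclude $u\equiv0$.
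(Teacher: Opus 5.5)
Your architecture matches the paper's: dyadic annuli, H\"older on each annulus, a recursive inequality for $I(R)$ iterated from a polynomial bound toward the exponent $n-2sq'$, and a separate vanishing argument at $q=q_S$. However, the last step of the critical case fails as written.

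\textbf{The gap: the critical-case vanishing argument.} You restrict the $u$-integrals to $\{|x|\ge R\}$ because $\mathcal L_K\varphi_R$ supposedly ``lives'' there, but that is local intuition. For $x\in B_R$ one has $\varphi_R(x)=1$ and $\mathcal L_K\varphi_R(x)=\int(1-\varphi_R(x+z))K(z)\,dz$. For $K(z)=|z|^{-n-2s}$ this is at least $\int_{|z|\ge 3R}|z|^{-n-2s}\,dz=cR^{-2s}$. So the piece $\int_{B_R}|u|\,|\mathcal L_K\varphi_R|$ is genuinely present. At $q=q_S$ (where $1/q'=2s/n$), H\"older bounds it only by $CR^{-2s}|B_R|^{1/q'}\|u\|_{L^q(B_R)}=C\|u\|_{L^q(B_R)}$, which is not small. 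Integrability of $u$ on $\{|x|\ge R\}$ therefore does not conclude. The missing idea is a two-scale split at a radius $\rho$ independent of $R$:
\begin{itemize}
\item On $B_\rho$, use $|\mathcal L_K\varphi_R|\le CR^{-2s}$, so this part is at most $CR^{-2s}\int_{B_\rho}|u|\to0$ as $R\to\infty$.
\item On $B_\rho^c$, use H\"older with the uniform bound $\|\mathcal L_K\varphi_R\|_{L^{n/(2s)}(\R^n)}\le C$. This follows from $|\mathcal L_K\varphi_R|\le CR^{-2s}\mathbf 1_{B_{2R}}+CR^n|x|^{-n-2s}\mathbf 1_{\R^n\setminus B_{2R}}$.
\end{itemize}
Letting $R\to\infty$ gives $\int_{\R^n}|u|^q\le C\|u\|_{L^q(\R^n\setminus B_\rho)}$, and then $\rho\to\infty$ gives $u\equiv0$.

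\textbf{A milder instance of the same issue in Step 2.} Since $u$ is signed, you need a bound on $|\mathcal L_K\varphi_R|$. Only the one-sided convexity bound $\mathcal L_K(\phi_R^m)\le m\phi_R^{m-1}\mathcal L_K\phi_R\le CR^{-2s}\varphi_R^{1-1/m}$ holds. The negative part of $\mathcal L_K\varphi_R$ is not controlled by any power of $\varphi_R$. For example, at $|x|=2R$ one has $\varphi_R(x)=0$, while for the fractional Laplacian $\mathcal L_K\varphi_R(x)=-\int\varphi_R(x+z)K(z)\,dz$ has size $R^{-2s}$. So the weighted Young absorption on $B_{4R}$ fails, and your recursion is missing the H\"older term $CR^{-2s}\int_{B_{4R}}|u|\le CR^{a}I(4R)^{1/q}$, with $a=n-2s-n/q$. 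This omission is harmless: the missing terms are just the $k=0,1$ terms of the paper's inequality in Lemma~\ref{lem:recursive}, and including them makes the power $m$ and the Young step unnecessary.

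\textbf{The starting bound must be proved, not assumed.} The recursion alone cannot supply it, but the tail condition does. The cutoff estimate gives
\[
I(R)\le CR^{-2s}\int_{B_{2R}}|u|+CR^n\int_{|x|\ge2R}|u|\,|x|^{-n-2s}\le CR^n\int_{\R^n}|u|(1+|x|)^{-n-2s}.
\]
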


When $n > 2s$, the exponent $q_S = \frac{n}{n-2s}$ in \eqref{eq:q-range} is the natural Serrin-type threshold for the existence of nontrivial supersolutions, and Theorem~\ref{thm:main} fails for every $q>q_S$. Indeed, as the class \eqref{eq:K-assump} contains the fractional Laplacian $\mathcal{L}_K = (-\Delta)^s$, this follows from the fact that for every $q > q_S$, the function
\begin{align}
u(x) = c(1+|x|)^{-\frac{2s}{q-1}}
\end{align}
is a weak supersolution of $(-\Delta)^s u \ge u^q$ in $\mathbb{R}^n$ for a sufficiently small constant $c > 0$, see \cite{FQ11_AM}.

The borderline case $q = 1$ is included in \eqref{eq:q-range}. In the local case $s = 1$, the corresponding statement fails without a growth restriction: the function $u(x) = -e^{x_1}$ solves $-\Delta u = |u|$ in $\mathbb{R}^n$. In our setting, such exponentially growing solutions are excluded by the tail condition $u \in L^1_{2s}(\mathbb{R}^n)$ in Definition~\ref{def:weak_sol} which is needed to define weak solutions (see Remark~\ref{rem:Lkphi-decay}), and which provides the initial polynomial bound of Lemma~\ref{lem:local_est} from which the iteration starts.

For the equation $(-\Delta)^s u = u^q$ in $\mathbb{R}^n$, the relevant threshold is the fractional Sobolev exponent $\frac{n+2s}{n-2s}$, and nonexistence results up to this exponent have been obtained through the extension method and moving plane and moving sphere arguments \cite{Li04_JEMS, CLO06_CPAM, CFY15_AM, CLZ17_JFA, CLL17_AM}; see also \cite{DDW17_TAMS} for solutions with finite Morse index. For the inequality $(-\Delta)^s u \ge u^q$, the threshold is the strictly smaller Serrin exponent $q_S$, and the Liouville theorem in the full range $1 < q \le q_S$ has been proved by barrier arguments; see \cite{CHW25_TAMS, BQ25_N} and the references therein. These arguments are based on the decay of the fundamental solution $\Phi(x) = c_{n,s}|x|^{-(n-2s)}$ and on the comparison principle. Liouville theorems for supersolutions are also of separate interest because blow-up procedures often produce limits that satisfy only an inequality.

Beyond the fractional Laplacian, Felmer and Quaas \cite{FQ11_AM} proved Liouville theorems for the extremal inequality $\mathcal{M}^- u + u^q \le 0$ in $\mathbb{R}^n$ in the range $1 < q \le \frac{N^-}{N^- - 2s}$, where the effective dimension $N^- \in [n, n+2s)$ comes from the nonlinearity of the Pucci operator $\mathcal{M}^-$ and the threshold lies strictly below $q_S$. Their proof again uses the fundamental solution of $\mathcal{M}^-$ and the maximum principle, and the kernels of the linear operators defining $\mathcal{M}^-$ are comparable to that of the fractional Laplacian.

Degenerate kernels were recently considered by Birindelli, Du, and Galise \cite{BDG25_CVPDE}. They studied stable operators with kernel of the form $a(z/|z|)\,|z|^{-n-2s}$,
where $a \in L^\infty(\mathbb{S}^{n-1})$ is even, nonnegative, and bounded below
by a positive constant on some cone of directions, so that the operator may diffuse only along that cone. For nonnegative classical supersolutions they proved the Liouville theorem in the sharp range $1 \le q \le \frac{n+s}{n-s}$ in the half-space, and in the range \eqref{eq:q-range} in the whole space, by a test function method adapted to this weak diffusion. In the whole space, their argument is based on the pointwise bound $\mathcal{L}\varphi_R \le C R^{-2s}\varphi_R$ for rescaled cutoff functions, which produces no tail term. The proof of this bound uses the positivity of $a(z/|z|)$ on a cone. Theorem~\ref{thm:main} extends their whole space result to kernels that are merely measurable with the one-sided bound \eqref{eq:K-assump}, with no homogeneity and no positivity on any cone, and to solutions without a sign condition.

The main point of the present paper is that only the upper bound in \eqref{eq:K-assump} is used. As a consequence, the result covers inhomogeneous, anisotropic, and degenerate kernels, for which a fundamental solution and a Harnack inequality need not exist and the comparison principle may fail, and the proof uses none of these tools. Moreover, the argument applies to signed supersolutions, and the range \eqref{eq:q-range} includes the borderline case $q = 1$.

Our approach is the test function method for nonexistence results, which goes back in the local setting to Mitidieri--Pohozaev \cite{MP01_TMIS} and Bidaut-V\'eron--Pohozaev \cite{BP01_JAM}. In the nonlocal setting, the test function $\varphi_R$ interacts with $u$ over the whole $\mathbb{R}^n$, and this produces the tail term $\int_{\mathbb{R}^n \setminus B_{2R}} u \, \mathcal{L}_K \varphi_R \, dx$ with no local analogue. We control this term by a dyadic decomposition. Partitioning $\mathbb{R}^n \setminus B_{2R}$ into the annuli $B_{2^{k+1}R} \setminus B_{2^k R}$ and applying H\"older's inequality on each annulus leads to the recursive inequality
\begin{align}
S(R) \le C \, R^{a} \sum_{k=0}^{\infty} 2^{-kb} \, S(2^{k+1} R)^{1/q}
\end{align}
for the local mass $S(R) := \int_{B_R}|u|^q \, dx$, where $a = n - 2s - n/q$ and $b = 2s + n/q$ (Lemma~\ref{lem:recursive}). If $q < q_S$, then $a < 0$, and iterating the inequality lowers the growth exponent of $S(R)$ until it becomes negative, which forces $S(R) \to 0$. If $q = q_S$, then $a = 0$, and the same iteration gives $u \in L^q(\mathbb{R}^n)$, and a tail estimate concludes the proof.

Although we focus on the operator $\mathcal{L}_K$ with a kernel satisfying \eqref{eq:K-assump}, the proof of Theorem~\ref{thm:main} uses only the weak formulation and the decay estimate of Lemma~\ref{lem:cutoff-K} for the operator applied to cutoff functions. The argument therefore applies to any operator $\mathcal{L}$ whose formal adjoint satisfies a similar estimate, where a weak supersolution of $\mathcal{L} u \ge |u|^q$ is understood as in Definition~\ref{def:weak_sol} with $\mathcal{L}_K$ replaced by $\mathcal{L}^\ast$. In the statement below, $\varphi_R(x) = \varphi(x/R)$ is the standard rescaling of a smooth cutoff function $\varphi$ such that $\varphi \equiv 1$ in $B_1$ and $\text{supp}(\varphi) \subset B_2$.

\begin{proposition}\label{cor:sub}
Let $n \ge 1$, $s \in (0,1)$, and let $\mathcal{L}$ be an operator whose formal adjoint $\mathcal{L}^\ast$ satisfies
\begin{equation}
    |\mathcal{L}^\ast (\varphi_R)(x) | \le \frac{C}{R^{2s}}\mathbf{1}_{B_{2R}}(x)
    + C\,\frac{R^n}{|x|^{n+2s}}\,\mathbf{1}_{\mathbb{R}^n\setminus B_{2R}}(x)
\end{equation}
for every $R\ge1$, with a constant $C>0$ independent of $R$. Assume that $q$ satisfies \eqref{eq:q-range}. Then any weak supersolution $u$ of
\begin{equation}
    \mathcal{L} u \ge |u|^q \quad \text{in } \mathbb{R}^{n}
\end{equation}
 is trivial, namely, $u \equiv 0$ in $\mathbb{R}^n$.
\end{proposition}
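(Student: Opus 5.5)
The idea is to run the proof of Theorem~\ref{thm:main} verbatim, using the displayed bound on $\mathcal{L}^\ast(\varphi_R)$ in place of Lemma~\ref{lem:cutoff-K}. That proof uses only the weak formulation and the cutoff estimate, and the hypothesis here is exactly what that lemma provides.

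\textbf{Step 1: the basic inequality.} Test the weak formulation with $\varphi_R^m$ for a large $m$; since $\varphi^m$ is again a cutoff of the same type, the hypothesis applies to it. If powers of $\varphi$ are awkward, test with $\varphi_R$ itself. Using $\varphi_R \ge \mathbf{1}_{B_R}$ we get
\[
S(R)\le \int |u|^q\varphi_R\,dx \le \int_{B_{2R}} |u|\,|\mathcal{L}^\ast\varphi_R|\,dx + \int_{\mathbb{R}^n\setminus B_{2R}} |u|\,|\mathcal{L}^\ast\varphi_R|\,dx .
\]
The tail condition $u\in L^1_{2s}$ of Definition~\ref{def:weak_sol} makes the outer integral finite, because the weight is $R^n|x|^{-n-2s}$.

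\textbf{Step 2: the recursive inequality.} On $B_{2R}$, Hölder's inequality together with the bound $CR^{-2s}$ gives $CR^{-2s}R^{n(1-1/q)}S(2R)^{1/q}$. On each annulus $B_{2^{k+1}R}\setminus B_{2^kR}$ with $k\ge1$, the weight is at most $C R^{-2s}2^{-k(n+2s)}$, and Hölder's inequality gives
\[
C R^{-2s}2^{-k(n+2s)}(2^kR)^{n(1-1/q)}S(2^{k+1}R)^{1/q}.
\]
Summing over $k$ yields the inequality of Lemma~\ref{lem:recursive}, $S(R)\le CR^{a}\sum_k 2^{-kb}S(2^{k+1}R)^{1/q}$ with $a=n-2s-n/q$ and $b=2s+n/q$. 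The constants are uniform in $R\ge1$.

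\textbf{Step 3: iteration and conclusion.} First we need an a priori polynomial growth bound $S(R)\le CR^{\gamma_0}$, which Lemma~\ref{lem:local_est} supplies. Its proof should also use only the weak formulation and the cutoff bound: the $L^1_{2s}$ tail bounds the right-hand side of Step 1 by $CR^{n}\|u\|_{L^1_{2s}}$-type quantities, possibly after a Young absorption. Inserting $S(\rho)\le C\rho^{\gamma}$ into the recursion gives $S(R)\le CR^{a+\gamma/q}$, since $\sum 2^{-kb}2^{k\gamma/q}$ converges as long as $\gamma<qb=2sq+n$. The improved exponent is $\gamma\mapsto a+\gamma/q$, which has the fixed point $qa/(q-1)$.

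If $q<q_S$ (or $n\le2s$), then $a<0$. The iterates converge to a negative number, so after finitely many steps the exponent is negative, and letting $R\to\infty$ gives $u\equiv0$. If $q=1$, then $a=-2s<0$ and each step lowers the exponent by $2s$, so the same conclusion holds.

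If $q=q_S$, then $a=0$ and the exponents decrease to $0$. The main obstacle is passing from this to $S$ bounded. I would rewrite the recursion as $\Sigma\le C\Sigma^{1/q}$ with $\Sigma:=\sup_{R\ge1}S(R)$. To make this rigorous, I would run the argument with truncated suprema, or use the monotonicity of $S$ together with the geometric weights, to get $u\in L^q(\mathbb{R}^n)$. Once $u\in L^q$, I revisit Step 1 and split the contributions. The inner term is bounded by $C\bigl(\int_{B_{2R}\setminus B_R}|u|^q\bigr)^{1/q}$ plus a piece from $B_R$. To handle that piece, test with $\varphi_R-$ or use that $\mathcal{L}^\ast\varphi_R$ may not vanish on $B_R$; instead, apply Hölder with $|u|^q$ restricted to $\mathbb{R}^n\setminus B_{\varepsilon R}$ after rescaling the cutoff to have a large plateau. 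The tail term is bounded by $C\sum_k 2^{-kb}\bigl(\int_{|x|>R}|u|^q\bigr)^{1/q}$. Both tend to $0$ as $R\to\infty$, so $\int|u|^q=0$. The delicate point is the inner region, where the hypothesis gives no vanishing. I would handle it by using cutoffs $\varphi_{R}$ with $\varphi\equiv1$ on $B_1$ and writing $\int_{B_{2R}}|u|\,|\mathcal{L}^\ast\varphi_R|\le CR^{-2s}\|u\|_{L^q(B_{2R})}R^{n/q'}=C\|u\|_{L^q(B_{2R})}$. Combined with $\int|u|^q\varphi_R$ on the left, this gives $\|u\|_q^q\le C\|u\|_q$ only, so extra care is needed here. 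I would follow the paper's approach of choosing $\varphi_{R}$ adapted to two scales, $\varphi(x/R)$ with a transition region $B_{R^{\theta}}$, such that the inner $L^\infty$ bound decays faster relative to the volume.
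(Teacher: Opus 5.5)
Your treatment of $q<q_S$ and of $n\le 2s$ is correct and matches the paper. The hypothesis replaces Lemma~\ref{lem:cutoff-K}, Lemmas~\ref{lem:local_est} and~\ref{lem:recursive} go through verbatim, and finitely many iterations make the growth exponent negative. The gap is in the critical case $q=q_S$.

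\textbf{Getting $u\in L^q$ (minor).} The inequality $\Sigma\le C\Sigma^{1/q}$ presupposes $\Sigma<\infty$. Truncating the supremum does not close by itself, since the recursion at scale $R$ involves all larger scales $2^{k+1}R$. What you need is to track constants. With $a=0$ you get $S(R)\le C_mR^{\gamma_m}$, where $\gamma_m=nq^{-m}\to0$ and $C_{m+1}=\bar C\,C_m^{1/q}$. Here $\bar C$ is independent of $m$, because $b-\gamma_m/q\ge 2s$ and $2^{\gamma_m/q}\le 2^{n/q}$. Since $q>1$, the $C_m$ stay bounded. Letting $m\to\infty$ at fixed $R$ then gives $\sup_R S(R)<\infty$.

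\textbf{Getting $u\equiv0$ (the serious gap).} You never get from $u\in L^q$ to $u\equiv0$: you stop at $\|u\|_q^q\le C\|u\|_q$ and propose a two-scale cutoff. That is not what the paper does. It is also not admissible here: the hypothesis controls $\mathcal L^\ast$ only on the rescalings $\varphi_R=\varphi(\cdot/R)$ of one fixed cutoff. The same objection applies to $\varphi_R^m$ in your Step~1, though your fallback to $\varphi_R$ is fine.

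The missing idea is to split $\int|u|\,|\mathcal L^\ast\varphi_R|$ at a radius $\rho$ \emph{independent of} $R$.
\begin{itemize}
\item On $B_\rho$, the bound $CR^{-2s}$ times the fixed quantity $\int_{B_\rho}|u|$ tends to $0$ as $R\to\infty$.
\item On $\mathbb R^n\setminus B_\rho$, H\"older with $q'=n/(2s)$ bounds the integral by $\|u\|_{L^q(\mathbb R^n\setminus B_\rho)}\,\|\mathcal L^\ast\varphi_R\|_{L^{n/(2s)}(\mathbb R^n)}$. The hypothesis gives $\|\mathcal L^\ast\varphi_R\|_{L^{n/(2s)}}\le C$ uniformly in $R$, because $n/(2s)$ is exactly the scale-invariant exponent. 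Concretely, $R^{-n}|B_{2R}|=C$, and the outer integral $R^{n^2/(2s)}\int_{2R}^\infty r^{-n^2/(2s)-1}\,dr$ is also constant.
\end{itemize}
This gives $\int_{B_R}|u|^q\le o(1)+C\|u\|_{L^q(\mathbb R^n\setminus B_\rho)}$. Letting $R\to\infty$ and then $\rho\to\infty$ yields $\int|u|^q=0$.

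The split at $\varepsilon R$ that you mentioned in passing would also have worked with the same $\varphi_R$. The inner piece is at most $CR^{-2s}(\varepsilon R)^{2s}\|u\|_q=C\varepsilon^{2s}\|u\|_q$. The outer piece is at most $C\|u\|_{L^q(\mathbb R^n\setminus B_{\varepsilon R})}$, which tends to $0$ as $R\to\infty$. Your error was taking the inner region to be all of $B_{2R}$, which gives no smallness.
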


\begin{remark}\label{rem:signed}
The positivity of $K$ plays no role in our argument. Theorem~\ref{thm:main} remains valid for every measurable even kernel satisfying only $|K(z)|\le \Lambda\,|z|^{-n-2s}$ for $z\ne0$, so that $K$ may change sign or be nonpositive. Indeed, for any such $K$, the operator $\mathcal{L}_K$ coincides with its formal adjoint since $K$ is even, and it satisfies the cutoff estimate of Lemma~\ref{lem:cutoff-K}, so Proposition~\ref{cor:sub} applies.
\end{remark}

Proposition~\ref{cor:sub} also applies to mixed local-nonlocal operators, which have recently attracted considerable attention; see, for instance, \cite{BDVV22,BDVV25_NoDEA}. For the prototype $-\Delta+(-\Delta)^s$, Guo, Li, and Xie \cite{GLX26} established the sharp existence-nonexistence dichotomy for positive distributional supersolutions. The following consequence covers finite linear combinations of the Laplacian and fractional Laplacians, with no sign restriction on the coefficients or on the solutions.

\begin{corollary}[Mixed-order and local perturbations]
\label{cor:mixed-orders}
Let
\begin{align}
0<s_1<\cdots<s_m<1,
\qquad
c_1\neq0,
\qquad
c_2,\ldots,c_m, c_0\in\mathbb R,
\end{align}
and set
\begin{align}
\mathcal L
:=
c_0(-\Delta)+\sum_{j=1}^m c_j(-\Delta)^{s_j}.
\end{align}
Assume that \(q\) satisfies \eqref{eq:q-range} with \(s_1\). Then any function \(u\) satisfying
\begin{align}
\mathcal Lu\ge |u|^q
\qquad\text{in }\mathbb R^n
\end{align}
in the sense of Definition~\ref{def:weak_sol}, with
\(\mathcal L_K\) replaced by \(\mathcal L\) and \(s\) by \(s_1\), is
trivial.
\end{corollary}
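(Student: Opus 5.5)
The plan is to reduce Corollary~\ref{cor:mixed-orders} to Proposition~\ref{cor:sub} with $s=s_1$. Two things are needed: that $\mathcal L$ coincides with its formal adjoint, and that $\mathcal L^\ast\varphi_R=\mathcal L\varphi_R$ obeys the cutoff estimate of Proposition~\ref{cor:sub} with exponent $2s_1$.

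Self-adjointness is immediate. Each $(-\Delta)^{s_j}$ is $\mathcal L_{K_j}$ with the even kernel $K_j(z)=C_{n,s_j}|z|^{-n-2s_j}$, and $-\Delta$ is symmetric. So $\mathcal L^\ast=\mathcal L$ on $C_c^\infty$, and the weak formulation of Definition~\ref{def:weak_sol} with $\mathcal L$ in place of $\mathcal L_K$ makes sense.

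For the estimate, I would fix $R\ge1$ and treat each term separately, using the triangle inequality on the sum.

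\emph{Local term.} We have $-\Delta\varphi_R=R^{-2}(-\Delta\varphi)(\cdot/R)$, which is supported in $B_{2R}$ and bounded by $CR^{-2}\le CR^{-2s_1}$, since $R\ge1$ and $s_1<1$.

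\emph{Nonlocal terms.} By scaling, $(-\Delta)^{s_j}\varphi_R(x)=R^{-2s_j}\big((-\Delta)^{s_j}\varphi\big)(x/R)$. By Lemma~\ref{lem:cutoff-K} applied with the kernel $K_j$ and $s=s_j$ (or by the standard bound $|(-\Delta)^{s_j}\varphi(y)|\le C\min\{1,|y|^{-n-2s_j}\}$), this gives
\begin{align}
|(-\Delta)^{s_j}\varphi_R(x)|\le CR^{-2s_j}\mathbf 1_{B_{2R}}(x)+C\frac{R^n}{|x|^{n+2s_j}}\mathbf 1_{\mathbb R^n\setminus B_{2R}}(x).
\end{align}

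It remains to compare each exponent with $s_1$. Since $s_j\ge s_1$ and $R\ge1$, we get $R^{-2s_j}\le R^{-2s_1}$. On the region $|x|\ge 2R\ge 2$ we have $|x|^{-n-2s_j}\le|x|^{-n-2s_1}$. So every term, including the constants $|c_j|$ and $|c_0|$, is dominated by the right-hand side of the estimate in Proposition~\ref{cor:sub} with $s=s_1$, and the constant does not depend on $R$.

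Proposition~\ref{cor:sub} then applies. The range \eqref{eq:q-range} taken with $s_1$ gives $u\equiv0$.

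The only delicate point is how the signs of $c_j$ and the condition $c_1\neq0$ enter. Signs are harmless, because only $|\mathcal L\varphi_R|$ is estimated. The condition $c_1\neq0$ is not even needed for the upper estimate; it only fixes which $s_1$ is the smallest order that actually appears. The one thing to check is the tail condition: $u\in L^1_{2s_1}$ must be the class required by Definition~\ref{def:weak_sol} with $s$ replaced by $s_1$. It is, because $L^1_{2s_1}\subset L^1_{2s_j}$, so $\int u\,\mathcal L\varphi$ is well defined for every term.
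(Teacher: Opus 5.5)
Your proposal is correct and follows the paper's proof essentially step for step. Self-adjointness gives $\mathcal L^\ast=\mathcal L$, Lemma~\ref{lem:cutoff-K} with kernel $|z|^{-n-2s_j}$ handles each fractional term, and $-\Delta\varphi_R$ is bounded by $CR^{-2}\mathbf 1_{B_{2R}}$; the comparisons $R^{-2s_j}\le R^{-2s_1}$, $R^{-2}\le R^{-2s_1}$ and $|x|^{-n-2s_j}\le |x|^{-n-2s_1}$ on $|x|\ge 2R$ then reduce everything to Proposition~\ref{cor:sub} with $s=s_1$ (your extra check that $L^1_{2s_1}\subset L^1_{2s_j}$ is a detail the paper leaves implicit).
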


\medskip

The paper is organized as follows. In Section~\ref{sec:pre}, we introduce
the weak formulation and derive an estimate for the operator applied to
cutoff functions. Section~\ref{sec:esti_iter} establishes a polynomial growth
bound and a dyadic recursive inequality for the local mass \(S(R)\).
Finally, Section~\ref{sec:pf} proves Theorem~\ref{thm:main} and
Corollary~\ref{cor:mixed-orders}.

\section{Preliminaries}\label{sec:pre}

From now on, the nonlocal operator $\mathcal L_K$ is defined by \eqref{eq:LK}
with a kernel $K$ satisfying \eqref{eq:K-assump}, that is,
\begin{align}
    0 \le K(z)\le \Lambda |z|^{-n-2s},\qquad K(z)=K(-z)\qquad(z\neq 0).
\end{align}
In particular, $\mathcal L_K$ is translation-invariant and self-adjoint,
and degenerate since no lower bound on $K$ is imposed.

\subsection*{Tail space and weak supersolutions}

Since $\mathcal L_K$ is nonlocal, we impose a mild integrability condition at infinity.
Define the tail space
\begin{align}
L_{2s}^1(\R^n)
:=\left\{u\in L^1_{\rm loc}(\R^n):\ \int_{\R^n}\frac{|u(x)|}{(1+|x|)^{n+2s}}\,dx<\infty\right\}.
\end{align}

\begin{definition}\label{def:weak_sol}
Let $q\ge1$. A function $u$ is called a \emph{weak supersolution}
to \eqref{eq:main-ineq} if
$u\in L^q_{\rm loc}(\R^n)\cap L_{2s}^1(\R^n)$
and
\begin{align}\label{eq:weak-supersol}
\int_{\R^n} u(x)\,\mathcal L_K\varphi(x)\,dx
\;\ge\;
\int_{\R^n} |u(x)|^q\,\varphi(x)\,dx
\end{align}
for every test function $\varphi\in C_c^\infty(\R^n)$ with $\varphi\ge 0$.
\end{definition}

\begin{remark}\label{rem:Lkphi-decay}
If $\varphi\in C_c^\infty(\R^n)$, then $\mathcal L_K\varphi(x)$ decays like $|x|^{-n-2s}$ as $|x|\to\infty$. Indeed, if $\supp\varphi\subset B_{2R}$ and $|x|\ge 4R$, then $\varphi(x)=0$ and
\begin{align}
|\mathcal L_K\varphi(x)|
=\left|\int_{\R^n}\varphi(x+z)K(z)\,dz\right|
=\left|\int_{B_{2R}}\varphi(y)\,K(y-x)\,dy\right|
\le C\,\frac{R^n}{|x|^{n+2s}},
\end{align}
where the last inequality follows from the upper bound \eqref{eq:K-assump} on $K$ and the fact that $|y-x|\ge |x|-2R\ge \frac{|x|}{2}$ for $y\in B_{2R}$; see also \eqref{est:outside}.
Therefore, the tail condition $u\in L_{2s}^1(\R^n)$ guarantees that the left-hand side of \eqref{eq:weak-supersol} is finite. On the right-hand side, the term $\int |u|^q \varphi$ is well-defined since $u \in L^q_{\rm loc}(\mathbb{R}^n)$ and $\varphi$ has compact support.
\end{remark}

\subsection*{Cutoff functions}

Fix $\varphi\in C_c^\infty(\R^n)$ satisfying
\begin{equation}\label{def:eta}
0\le \varphi\le 1,\qquad \varphi\equiv 1 \ \text{in }B_1,\qquad \varphi\equiv 0 \ \text{in }\R^n\setminus B_2.
\end{equation}
For $R\ge 1$ define $\varphi_R(x):=\varphi(x/R)$.

\begin{lemma}[Cutoff estimate]\label{lem:cutoff-K}
There exists $C=C(n,s,\Lambda, \varphi)>0$ such that for every $R\ge 1$ and every $x\in\R^n$,
\begin{equation}\label{eq:cutoff-lemma}
\bigl|\mathcal L_K(\varphi_R)(x)\bigr|
\le \frac{C}{R^{2s}}\mathbf 1_{B_{2R}}(x)
+ C\,\frac{R^n}{|x|^{n+2s}}\,\mathbf 1_{\R^n\setminus B_{2R}}(x).
\end{equation}
\end{lemma}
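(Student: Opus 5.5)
The plan is to treat the two regions $|x|<2R$ and $|x|\ge 2R$ separately. Throughout, the only properties of $K$ used are evenness and the upper bound in \eqref{eq:K-assump}. The whole argument reduces to the case $R=1$ through the scaling of the kernel bound.

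\textbf{Inner region $x\in B_{2R}$.} Since $K$ is even, the principal value can be symmetrized:
\begin{align}
\mathcal L_K\varphi_R(x)=-\frac12\int_{\R^n}\bigl(\varphi_R(x+z)+\varphi_R(x-z)-2\varphi_R(x)\bigr)K(z)\,dz .
\end{align}
This integral converges absolutely, with no principal value needed. I then split it at $|z|=R$.

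For $|z|<R$, Taylor's formula gives
\begin{align}
|\varphi_R(x+z)+\varphi_R(x-z)-2\varphi_R(x)|\le \|D^2\varphi\|_\infty R^{-2}|z|^2 .
\end{align}
Combined with $K(z)\le\Lambda|z|^{-n-2s}$, this part contributes at most
\begin{align}
C\Lambda R^{-2}\int_{B_R}|z|^{2-n-2s}\,dz=C R^{-2s},
\end{align}
where the integral is finite because $s<1$.

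For $|z|\ge R$, the second difference is bounded by $4\|\varphi\|_\infty\le 4$. This part contributes at most
\begin{align}
4\Lambda\int_{|z|\ge R}|z|^{-n-2s}\,dz=C R^{-2s},
\end{align}
which is finite because $s>0$. Together these give the bound $C R^{-2s}$ on $B_{2R}$.

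\textbf{Outer region $|x|\ge 2R$.} Here $\varphi_R(x)=0$, and the integrand vanishes near $z=0$, so no principal value is needed. As in Remark~\ref{rem:Lkphi-decay},
\begin{align}
|\mathcal L_K\varphi_R(x)|=\int_{B_{2R}}\varphi_R(y)K(y-x)\,dy .
\end{align}
For $|x|\ge 4R$ and $y\in B_{2R}$ we have $|y-x|\ge|x|/2$, so the integral is at most
\begin{align}
\Lambda 2^{n+2s}|x|^{-n-2s}|B_{2R}|=C R^n|x|^{-n-2s}.
\end{align}

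\textbf{The annulus $2R\le|x|<4R$.} This is the only delicate spot, because $|y-x|$ can be arbitrarily small when $x$ is near the boundary of $B_{2R}$. The remark's argument does not apply there, and I do not plan to bound $\varphi_R(y)K(y-x)$ directly. Instead I will reuse the inner-region argument, which never used $|x|<2R$: the Taylor bound and the sup bound hold for every $x\in\R^n$. This gives $|\mathcal L_K\varphi_R(x)|\le C R^{-2s}$ for all $x$.

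On the annulus $|x|<4R$, so
\begin{align}
R^{-2s}=R^{n}R^{-n-2s}\le 4^{n+2s}R^n|x|^{-n-2s}.
\end{align}
Hence the global bound $C R^{-2s}$ already has the required form $C R^n|x|^{-n-2s}$ there, after enlarging the constant.

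Combining the three regions yields \eqref{eq:cutoff-lemma}, with $C$ depending only on $n$, $s$, $\Lambda$, $\|\varphi\|_{C^2}$, and hence on $\varphi$.
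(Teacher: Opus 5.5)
Your proposal is correct and follows essentially the same route as the paper: you symmetrize using evenness, split at $|z|=R$ to get the global bound $|\mathcal L_K\varphi_R|\le CR^{-2s}$, use $|x-y|\ge |x|/2$ for $|x|\ge 4R$, and cover the annulus $2R\le|x|<4R$ with the global bound together with $R^{-2s}\le 4^{n+2s}R^n|x|^{-n-2s}$. Your opening remark about reducing to $R=1$ by scaling is never actually used, but it does no harm.
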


\begin{proof}
Fix $x\in\R^n$. Using $K(z)=K(-z)$, we symmetrize the principal value integral:
\begin{align}\label{eq:symmetrize}
\mathcal L_K\varphi_R(x)
&=\PV\int_{\R^n}\big(\varphi_R(x)-\varphi_R(x+z)\big)\,K(z)\,dz \\
&=\frac12\int_{\R^n}\big(2\varphi_R(x)-\varphi_R(x+z)-\varphi_R(x-z)\big)\,K(z)\,dz.
\end{align}
The integrand in \eqref{eq:symmetrize} is $O(|z|^2)$ as $z\to 0$, hence the last integral is absolutely
convergent.

\smallskip
\noindent\emph{Step 1: estimate on $B_{2R}$.}
For $|z|\le R$, Taylor's theorem yields
\begin{align}
\big|2\varphi_R(x)-\varphi_R(x+z)-\varphi_R(x-z)\big|
\le \|D^2\varphi_R\|_{L^\infty}\,|z|^2.
\end{align}
Since $\varphi_R(x)=\varphi(x/R)$, one has $\|D^2\varphi_R\|_{L^\infty}\le C R^{-2}$.
Using the upper bound $K(z)\le \Lambda |z|^{-n-2s}$,
\begin{align}
\int_{|z|\le R}\big|2\varphi_R(x)-\varphi_R(x+z)-\varphi_R(x-z)\big|\,K(z)\,dz
\\
\le C R^{-2}\int_{|z|\le R}|z|^{2-n-2s}\,dz
\le C R^{-2s}.
\end{align}
For $|z|>R$ we use $|2\varphi_R(x)-\varphi_R(x+z)-\varphi_R(x-z)|\le 4$ to obtain
\begin{align}
\int_{|z|>R}\big|2\varphi_R(x)-\varphi_R(x+z)-\varphi_R(x-z)\big|\,K(z)\,dz
\\
\le C\int_{|z|>R}|z|^{-n-2s}\,dz
\le C R^{-2s}.
\end{align}
Combining with \eqref{eq:symmetrize} gives
\begin{equation}\label{eq:uniform-cutoff}
|\mathcal L_K\varphi_R(x)|\le C R^{-2s}\qquad\text{for all }x\in\R^n,
\end{equation}
and in particular this yields the first term in \eqref{eq:cutoff-lemma} on $B_{2R}$.

\smallskip
\noindent\emph{Step 2: estimate on $\R^n\setminus B_{2R}$.}
If $|x|\ge 2R$, then $\varphi_R(x)=0$, and thus
\begin{align}\label{est:outside}
|\mathcal L_K\varphi_R(x)|
&=\left|\int_{\R^n}\varphi_R(x+z)K(z)\,dz\right|
=\left|\int_{B_{2R}}\varphi_R(y)\,K(y-x)\,dy\right|
\\&\le \Lambda\int_{B_{2R}}|x-y|^{-n-2s}\,dy.
\end{align}
If $|x|\ge 4R$, then $|x-y|\ge |x|/2$ for $y\in B_{2R}$, hence
\begin{align}
|\mathcal L_K\varphi_R(x)|\le C\,\frac{R^n}{|x|^{n+2s}}.
\end{align}
If $2R\le |x|\le 4R$, we use \eqref{eq:uniform-cutoff} together with
$R^{-2s}\le C R^n |x|^{-n-2s}$ (since $|x|\le 4R$) to obtain the same bound.
This proves \eqref{eq:cutoff-lemma}.
\end{proof}

\section{$L^q$-growth estimates and a dyadic inequality}\label{sec:esti_iter}

Let $u$ be a weak supersolution of \eqref{eq:main-ineq}. For $R\ge 1$ we set
\begin{equation}\label{eq:SR-def}
S(R):=\int_{B_R}|u|^q\,dx.
\end{equation}
In this section, we first obtain a rough polynomial bound for $S(R)$ and then establish a dyadic recursive inequality. This inequality serves as the starting point for the iteration argument presented in Section~\ref{sec:pf}.

\begin{lemma}\label{lem:local_est}
Let $q\ge1$ and let $u$ be a weak supersolution of \eqref{eq:main-ineq}. Then there exists $C>0$, independent of $R$, such that
\begin{equation}\label{eq:local_bound}
S(R)=\int_{B_R}|u|^q\,dx \le C\,R^n \qquad\text{for all }R\ge 1.
\end{equation}
\end{lemma}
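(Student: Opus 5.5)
Test the weak formulation \eqref{eq:weak-supersol} with $\varphi_R$ and bound the left-hand side using Lemma~\ref{lem:cutoff-K} and the tail condition $u\in L^1_{2s}(\R^n)$. Since $\varphi_R\ge0$ and $\varphi_R\equiv1$ on $B_R$,
\begin{align}
S(R)\le\int_{\R^n}|u|^q\varphi_R\,dx\le\int_{\R^n}u\,\mathcal L_K\varphi_R\,dx
\le \frac{C}{R^{2s}}\int_{B_{2R}}|u|\,dx+C R^n\int_{\R^n\setminus B_{2R}}\frac{|u(x)|}{|x|^{n+2s}}\,dx .
\end{align}

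The tail term is bounded directly. For $|x|\ge 2R\ge2$ we have $|x|\ge \tfrac12(1+|x|)$, so
\begin{align}
C R^n\int_{\R^n\setminus B_{2R}}\frac{|u|}{|x|^{n+2s}}\,dx\le C R^n\,\|u\|_{L^1_{2s}},
\end{align}
where $\|u\|_{L^1_{2s}}:=\int_{\R^n}|u|(1+|x|)^{-n-2s}\,dx<\infty$. For the local term, on $B_{2R}$ we have $(1+|x|)^{n+2s}\le (3R)^{n+2s}$. Hence
\begin{align}
\int_{B_{2R}}|u|\,dx\le C R^{n+2s}\|u\|_{L^1_{2s}},
\end{align}
and multiplying by $CR^{-2s}$ gives again $CR^n\|u\|_{L^1_{2s}}$. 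Combining the two estimates yields $S(R)\le C R^n$ with $C$ depending on $n,s,\Lambda,\varphi$ and $\|u\|_{L^1_{2s}}$, but not on $R$.

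The only technical points are the following. First, $\varphi_R$ is an admissible test function: it lies in $C_c^\infty$ and is nonnegative, so Definition~\ref{def:weak_sol} applies. Second, all integrals are finite by Remark~\ref{rem:Lkphi-decay}. No use of the nonlinearity or of the range of $q$ is needed beyond $|u|^q\ge0$, which lets us drop the region $B_{2R}\setminus B_R$ on the right-hand side.

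I expect no genuine obstacle. The one step needing care is the local term: one must avoid estimating $\int_{B_{2R}}|u|$ through H\"older in terms of $S(2R)$, which would create a circular bound. Instead it is controlled directly by the weighted $L^1$ tail norm, at the cost of the factor $R^{n+2s}$. This factor is exactly compensated by the $R^{-2s}$ from Lemma~\ref{lem:cutoff-K}.
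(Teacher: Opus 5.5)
Your proposal is correct and follows the paper's proof essentially verbatim: you test with $\varphi_R$, apply the cutoff estimate of Lemma~\ref{lem:cutoff-K}, and bound the tail term via $|x|\ge\frac12(1+|x|)$ and the local term via $(1+|x|)^{n+2s}\le CR^{n+2s}$ on $B_{2R}$, both in terms of the weighted $L^1_{2s}$ norm. Your remark that the local term must be controlled by this tail norm rather than by H\"older (which would be circular) matches exactly why the paper argues this way.
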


\begin{proof}
Since $\varphi_R\equiv 1$ on $B_R$ and $\varphi_R\ge 0$, the weak formulation
\eqref{eq:weak-supersol} yields
\begin{align}
S(R)\le \int_{\R^n}|u|^q\varphi_R\,dx \le \int_{\R^n}u\,\mathcal L_K\varphi_R\,dx
\le \int_{\R^n}|u|\,|\mathcal L_K\varphi_R|\,dx.
\end{align}
 Applying Lemma~\ref{lem:cutoff-K}, we obtain
\begin{equation}\label{eq:SR-basic-split}
S(R)\le \frac{C}{R^{2s}}\int_{B_{2R}}|u|\,dx
+ C R^n\int_{\R^n\setminus B_{2R}}\frac{|u(x)|}{|x|^{n+2s}}\,dx.
\end{equation}
Let
\begin{align}
\mathcal T(u):=\int_{\R^n}\frac{|u(x)|}{(1+|x|)^{n+2s}}\,dx<\infty.
\end{align}
Since $R\ge 1$ and $|x|\ge 2R$ implies $|x|\ge \frac12(1+|x|)$, we have
\begin{align}
\int_{\R^n\setminus B_{2R}}\frac{|u(x)|}{|x|^{n+2s}}\,dx
\le C\int_{\R^n\setminus B_{2R}}\frac{|u(x)|}{(1+|x|)^{n+2s}}\,dx
\le C\,\mathcal T(u).
\end{align}
Moreover, for $x\in B_{2R}$ we have $(1+|x|)^{n+2s}\le (1+2R)^{n+2s}\le C R^{n+2s}$, hence
\begin{align}
\int_{B_{2R}}|u|\,dx
\le C R^{n+2s}\int_{B_{2R}}\frac{|u(x)|}{(1+|x|)^{n+2s}}\,dx
\le C R^{n+2s}\,\mathcal T(u).
\end{align}
Substituting the last two bounds into \eqref{eq:SR-basic-split} gives
\begin{align}
S(R)\le C R^{-2s}\cdot R^{n+2s}\mathcal T(u) + C R^n\mathcal T(u)\le C R^n,
\end{align}
which proves \eqref{eq:local_bound}.
\end{proof}

\begin{lemma}\label{lem:recursive}
Let $u$ be a weak supersolution of \eqref{eq:main-ineq} and let $S(R)$ be given by
\eqref{eq:SR-def}. Then there exists $C>0$ such that for every $R\ge 1$,
\begin{equation}\label{eq:recursive_ineq}
S(R)\le C\,R^{a}\sum_{k=0}^{\infty}2^{-kb}\,S(2^{k+1}R)^{1/q},
\end{equation}
where
\begin{equation}\label{eq:ab-def}
a:=n-2s-\frac{n}{q},\qquad b:=2s+\frac{n}{q}.
\end{equation}
\end{lemma}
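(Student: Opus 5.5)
We test the weak formulation \eqref{eq:weak-supersol} with $\varphi_R$. Since $\varphi_R\equiv1$ on $B_R$ and $\varphi_R\ge0$,
\begin{align}
S(R)\le\int_{\R^n}|u|^q\varphi_R\,dx\le\int_{\R^n}|u|\,|\mathcal L_K\varphi_R|\,dx .
\end{align}
We then split $\R^n$ into $B_{2R}$ and the dyadic annuli $A_k:=B_{2^{k+1}R}\setminus B_{2^kR}$ for $k\ge1$. On each piece we bound $|\mathcal L_K\varphi_R|$ by Lemma~\ref{lem:cutoff-K}. On $B_{2R}$ it is at most $CR^{-2s}$. On $A_k$ with $k\ge1$ we have $|x|\ge 2^kR$, so it is at most $CR^n(2^kR)^{-n-2s}=C2^{-k(n+2s)}R^{-2s}$. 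Setting $A_0:=B_{2R}$, both cases read
\begin{align}
|\mathcal L_K\varphi_R|\le C\,2^{-k(n+2s)}R^{-2s}\qquad\text{on }A_k,\ k\ge0.
\end{align}

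Next we apply H\"older's inequality with exponents $q$ and $q'$ on each $A_k\subset B_{2^{k+1}R}$:
\begin{align}
\int_{A_k}|u|\,dx\le |B_{2^{k+1}R}|^{1-1/q}\,S(2^{k+1}R)^{1/q}\le C\,(2^kR)^{n-n/q}\,S(2^{k+1}R)^{1/q}.
\end{align}
The case $q=1$ is trivial here. Multiplying the two bounds, the $k$-th term is bounded by
\begin{align}
C\,R^{n-2s-n/q}\,2^{-k(n+2s)+k(n-n/q)}\,S(2^{k+1}R)^{1/q}=C\,R^{a}\,2^{-kb}\,S(2^{k+1}R)^{1/q},
\end{align}
with $a,b$ exactly as in \eqref{eq:ab-def}. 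Summing over $k\ge0$ gives \eqref{eq:recursive_ineq}.

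The only point requiring care is convergence and legitimacy of the series. The splitting is justified because $u\in L^1_{2s}(\R^n)$ makes $\int|u|\,|\mathcal L_K\varphi_R|$ finite, so the countable decomposition is valid by monotone convergence. The right-hand side of \eqref{eq:recursive_ineq} is finite by Lemma~\ref{lem:local_est}: it gives $S(2^{k+1}R)^{1/q}\le C(2^kR)^{n/q}$, while $b>n/q$ since $s>0$. Even without this, the inequality holds trivially if the right-hand side is infinite. I do not expect a genuine obstacle; the main bookkeeping step is checking that the exponent of $2^k$ combines to $-b$.
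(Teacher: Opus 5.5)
Your proposal is correct and follows the paper's proof: test with $\varphi_R$, apply the cutoff estimate of Lemma~\ref{lem:cutoff-K}, and use H\"older's inequality on $B_{2R}$ and on each dyadic annulus. Treating $B_{2R}$ as the $k=0$ annulus is only a notational repackaging of the paper's split into a local term and a tail term, and it yields the same bound $C\,R^{a}2^{-kb}S(2^{k+1}R)^{1/q}$ term by term.
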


\begin{proof}
As in the proof of Lemma~\ref{lem:local_est},
\begin{align}
S(R)\le \int_{\R^n}|u|\,|\mathcal L_K\varphi_R|\,dx.
\end{align}
Using Lemma~\ref{lem:cutoff-K} we obtain
\begin{equation}\label{eq:SR-start}
S(R)\le \frac{C}{R^{2s}}\int_{B_{2R}}|u|\,dx
+ C R^n\int_{\R^n\setminus B_{2R}}\frac{|u(x)|}{|x|^{n+2s}}\,dx.
\end{equation}
By H\"older's inequality,
\begin{equation}\label{eq:local-holder}
\int_{B_{2R}}|u|\,dx
\le |B_{2R}|^{1-1/q}\Bigl(\int_{B_{2R}}|u|^q\,dx\Bigr)^{1/q}
\le C R^{n(1-1/q)}S(2R)^{1/q}.
\end{equation}

For $k\ge 1$ define the annuli
\begin{align}
A_k:=B_{2^{k+1}R}\setminus B_{2^kR}.
\end{align}
Then $|x|\ge 2^kR$ on $A_k$, and therefore
\begin{align}\label{eq:tail-dyadic}
\int_{\R^n\setminus B_{2R}}\frac{|u(x)|}{|x|^{n+2s}}\,dx
&=\sum_{k=1}^{\infty}\int_{A_k}\frac{|u(x)|}{|x|^{n+2s}}\,dx\\
&\le \sum_{k=1}^{\infty}(2^kR)^{-(n+2s)}\int_{A_k}|u|\,dx.
\end{align}
Applying H\"older on each $A_k$ and using $|A_k|\le C(2^kR)^n$, we find
\begin{align}
\int_{A_k}|u|\,dx
&\le |A_k|^{1-1/q}\Bigl(\int_{A_k}|u|^q\,dx\Bigr)^{1/q}
\\
&\le C(2^kR)^{n(1-1/q)}\,S(2^{k+1}R)^{1/q}.
\end{align}
Substituting this into \eqref{eq:tail-dyadic} yields
\begin{equation}\label{eq:tail-final}
\int_{\R^n\setminus B_{2R}}\frac{|u(x)|}{|x|^{n+2s}}\,dx
\le C R^{-2s-\frac{n}{q}}\sum_{k=1}^{\infty}2^{-k(2s+\frac{n}{q})}\,S(2^{k+1}R)^{1/q}.
\end{equation}

Combining \eqref{eq:SR-start}, \eqref{eq:local-holder}, and \eqref{eq:tail-final} we obtain
\begin{align}
S(R)\le C R^{n-2s-\frac{n}{q}}
\Bigl(S(2R)^{1/q}+\sum_{k=1}^{\infty}2^{-k(2s+\frac{n}{q})}S(2^{k+1}R)^{1/q}\Bigr),
\end{align}
which is exactly \eqref{eq:recursive_ineq} after reindexing the sum and recalling \eqref{eq:ab-def}.
\end{proof}

\begin{remark}\label{rem:a-negative-subcritical}
When $n>2s$, the exponent $a=n-2s-n/q$ in \eqref{eq:ab-def} is negative if and only if $q$ is in the subcritical range $q<n/(n-2s)$. In this range, one has $R^{a}\to 0$ as $R\to\infty$.
This decay factor is the key in the iteration based on \eqref{eq:recursive_ineq} in Section~\ref{sec:pf}.
\end{remark}

\section{Proofs of the main results}\label{sec:pf}

Based on the estimates established in
Section~\ref{sec:esti_iter}, we first prove
Theorem~\ref{thm:main} by considering the subcritical, critical, and
low-dimensional cases separately. We then prove
Corollary~\ref{cor:mixed-orders}.

\subsection{The subcritical case: $1 \le q < \frac{n}{n-2s}$}

In this case, the exponent $a$ defined in Lemma \ref{lem:recursive} satisfies
$$
a = n - 2s - \frac{n}{q}  < 0.
$$
We use an iteration argument using the recursive inequality \eqref{eq:recursive_ineq} to improve the polynomial growth rate of $S(R)$.

Let $\gamma_0 = n$. By Lemma \ref{lem:local_est}, we have the initial bound $S(R) \le C_0 R^{\gamma_0}$ for all $R \ge 1$. We proceed by induction. Suppose that at step $m \ge 0$, the estimate
\begin{align}\label{eq:ind_hyp}
S(R) \le C_m R^{\gamma_m}, \quad \text{for all } R \ge 1
\end{align}
holds for some constant $C_m > 0$. Substituting this into the recursive inequality \eqref{eq:recursive_ineq}, we estimate the infinite series term:
\begin{align}
\sum_{k=0}^{\infty} 2^{-kb} S(2^{k+1}R)^{\frac{1}{q}} 
&\le \sum_{k=0}^{\infty} 2^{-kb} \left[ C_m (2^{k+1}R)^{\gamma_m} \right]^{\frac{1}{q}} \\
&= C_m^{\frac{1}{q}} R^{\frac{\gamma_m}{q}} 2^{\frac{\gamma_m}{q}} \sum_{k=0}^{\infty} 2^{-k(b - \frac{\gamma_m}{q})}.
\end{align}
Recall that $b = 2s + n/q$. Since we start with $\gamma_0 = n$ and the sequence $\gamma_m$ is decreasing (as shown below), the exponent in the sum satisfies
$$
b - \frac{\gamma_m}{q} \ge b - \frac{n}{q} = 2s > 0.
$$
Thus, the geometric series converges to a finite constant $\sigma_m$, which is uniformly bounded by some $\sigma < \infty$ independent of $m$. Combining this with \eqref{eq:recursive_ineq}, we obtain
\begin{align}
S(R) \le C R^a \cdot C_m^{\frac{1}{q}} R^{\frac{\gamma_m}{q}} 2^{\frac{\gamma_m}{q}} \sigma_m 
= \left( C 2^{\frac{\gamma_m}{q}} \sigma_m C_m^{\frac{1}{q}} \right) R^{a + \frac{\gamma_m}{q}}.
\end{align}
This yields the following recurrence relations for the exponent and the constant:
\begin{align}\label{eq:recurrence}
\gamma_{m+1} := a + \frac{\gamma_m}{q}, \qquad C_{m+1} := \bar{C} C_m^{\frac{1}{q}},
\end{align}
where $\bar{C}$ is a constant depending on $n, s, q$ but independent of $R$ and $m$.

If $q > 1$, the recurrence for the exponent has the fixed point $\gamma_\infty = \frac{a}{1-1/q} < 0$, and the sequence $\gamma_m$ decreases to $\gamma_\infty$. If $q = 1$, then $\gamma_{m+1} = \gamma_m - 2s$, so that $\gamma_m = n - 2ms$. In either case, there exists a finite integer $M$ such that $\gamma_M < 0$.

Regarding the constant, iterating the relation $C_{m+1} = \bar{C}\, C_m^{1/q}$ gives
\begin{align}
C_M = \bar{C}^{\,1 + \frac{1}{q} + \cdots + \frac{1}{q^{M-1}}}\, C_0^{\,q^{-M}} < \infty.
\end{align}
Consequently, after $M$ iterations, we arrive at the bound
\begin{align}
    S(R) \le C_M R^{\gamma_M}.
\end{align}
Since $\gamma_M < 0$, taking the limit $R \to \infty$ implies $\int_{\R^n} |u|^q \, dx = 0$. Therefore we conclude that $u \equiv 0$ in $\R^n$.

\subsection{The critical case: $q = \frac{n}{n-2s}$}

We now consider the critical case. The dyadic inequality \eqref{eq:recursive_ineq} still holds and yields
\begin{align}
    \int_{B_R}|u|^q\, dx \le \left( C \cdot 2^{\frac{\gamma_m}{q}} \sigma \cdot C_m^{\frac{1}{q}} \right) R^{\frac{\gamma_m}{q}},
\end{align}
where
\begin{align}
    \gamma_{m+1} = \frac{\gamma_m}{q},
    \qquad
    C_{m+1} = \bar{C}\, C_m^{\frac{1}{q}}.
\end{align}
Since $\{C_m\}$ is bounded and $\gamma_m \to 0$ as $m\to\infty$, we may pass to the limit and obtain
\begin{equation}
    \int_{B_R}|u|^q\, dx
    \le \lim_{m \to \infty} \left( C \cdot 2^{\frac{\gamma_m}{q}} \sigma \cdot C_m^{\frac{1}{q}} \right) R^{\frac{\gamma_m}{q}}
    \le C \sigma \lim_{m \to \infty} C_m^{\frac{1}{q}} .
\end{equation}
The right-hand side is independent of $R\ge 1$. Hence $u \in L^q(\mathbb{R}^n)$.

We next show $u \equiv 0$. Fix $\rho \in (0,R)$. By the weak formulation and the choice of $\varphi_R$,
\begin{align}
    \int_{B_R}|u|^q\, dx
    &\le \int_{\mathbb{R}^n} |u|\, |\mathcal{L}_K \varphi_R| \, dx\\
    &= \int_{B_\rho} |u|\, |\mathcal{L}_K \varphi_R| \, dx
      + \int_{B_\rho^c} |u|\, |\mathcal{L}_K \varphi_R| \, dx .
\end{align}
Set
\begin{equation}
J_1 := \int_{B_\rho} |u|\, |\mathcal{L}_K \varphi_R| \, dx,
\qquad
J_2 := \int_{B_\rho^c} |u|\, |\mathcal{L}_K \varphi_R| \, dx .
\end{equation}

Using \eqref{eq:cutoff-lemma}, we estimate the local term by
\begin{align}
    J_1
    \le C R^{-2s} \int_{B_\rho} |u|\, dx ,
\end{align}
and therefore $\lim_{R\to\infty} J_1 = 0$ for fixed $\rho$.

For $J_2$, Hölder's inequality ($\tfrac{n-2s}{n}+\tfrac{2s}{n}=1$) gives
\begin{align}
    J_2
    \le \left(\int_{B_\rho^c} |u|^q \, dx \right)^{\frac{1}{q}}
        \left(\int_{\mathbb{R}^n} |\mathcal{L}_K \varphi_R|^{\frac{n}{2s}}\, dx\right)^{\frac{2s}{n}} .
\end{align}
Applying again \eqref{eq:cutoff-lemma}, we obtain
\begin{align}
    \int_{\mathbb{R}^n} |\mathcal{L}_K \varphi_R|^{\frac{n}{2s}}\, dx
    &\le \int_{B_{2R}} C R^{-n}\, dx
    + C\int_{B_{2R}^c} R^{\frac{n^2}{2s}}|x|^{-\frac{n(n + 2s)}{2s}}\, dx \\
    &\le C + C\int_{2R}^\infty R^{\frac{n^2}{2s}} r^{-\frac{n(n + 2s)}{2s}+n-1}\, dr .
\end{align}
The radial integral is finite and the whole expression is bounded by a constant $C>0$ independent of $R$ and $\rho$.
Consequently,
\begin{equation}
    J_2 \le C \left(\int_{B_\rho^c} |u|^q \, dx \right)^{\frac{1}{q}} .
\end{equation}

Combining the bounds for $J_1$ and $J_2$, we have
\begin{equation}
    \int_{B_R} |u|^q \, dx
    \le J_1 + C \left(\int_{B_\rho^c} |u|^q \, dx \right)^{\frac{1}{q}} .
\end{equation}
Letting $R\to\infty$ and using $J_1\to 0$, we obtain
\begin{equation}
    \int_{\mathbb{R}^n} |u|^q \, dx
    \le C \left(\int_{B_\rho^c} |u|^q \, dx \right)^{\frac{1}{q}} .
\end{equation}
Since $u \in L^q(\mathbb{R}^n)$, the tail integral tends to $0$ as $\rho\to\infty$. Hence
\begin{align}
\int_{\mathbb{R}^n} |u|^q\, dx = 0,
\end{align}
and therefore $u \equiv 0$ in $\mathbb{R}^n$.

\subsection{The low dimensional case: $n \le 2s$}

Finally, we assume $n \le 2s$. In this case, for any $q \ge 1$, the exponent $a$ satisfies
\begin{align}
    a = (n - 2s) - \frac{n}{q} < 0,
\end{align}
since $n - 2s \le 0$ and $n/q > 0$. Therefore, the same iteration argument presented in the subcritical case applies. The sequence of exponents $\gamma_m$ decreases to a negative value, which leads to $S(R) \to 0$ as $R \to \infty$. Thus \(u\equiv0\) in \(\mathbb R^n\), and the proof of Theorem~\ref{thm:main} is complete.

\subsection{Mixed-order and local operators}
\label{sec:mixed-orders}

\begin{proof}[Proof of Corollary~\ref{cor:mixed-orders}]
Let
\begin{align}
\mathcal L
=
c_0(-\Delta)+\sum_{j=1}^m c_j(-\Delta)^{s_j}.
\end{align}
Since each operator in the sum is self-adjoint, one has
\(\mathcal L^\ast=\mathcal L\).

For \(0<s_j<1\), by Lemma~\ref{lem:cutoff-K} applied to the kernel $|z|^{-n-2s_j}$,
\begin{align}
|(-\Delta)^{s_j}\varphi_R(x)|
\le
C_jR^{-2s_j}\mathbf 1_{B_{2R}}(x)
+
C_j\frac{R^n}{|x|^{n+2s_j}}
\mathbf 1_{\mathbb R^n\setminus B_{2R}}(x).
\end{align}
The local term satisfies
\begin{align}
|(-\Delta)\varphi_R|
\le
CR^{-2}\mathbf 1_{B_{2R}}.
\end{align}

For \(R\ge1\), since \(s_j\ge s_1\), we have
\begin{align}
R^{-2s_j}\le R^{-2s_1},
\qquad
R^{-2}\le R^{-2s_1}.
\end{align}
Moreover, if \(|x|\ge2R\), then
\begin{align}
|x|^{-n-2s_j}
\le
|x|^{-n-2s_1}.
\end{align}
Consequently,
\begin{align}
|\mathcal L^\ast\varphi_R(x)|
\le
CR^{-2s_1}\mathbf 1_{B_{2R}}(x)
+
C\frac{R^n}{|x|^{n+2s_1}}
\mathbf 1_{\mathbb R^n\setminus B_{2R}}(x).
\end{align}
Thus \(\mathcal L^\ast\) satisfies the cutoff estimate in
Proposition~\ref{cor:sub} with \(s=s_1\). The conclusion follows from that
proposition.
\end{proof}

%
%
%


\end{document}